\documentclass{article}

\usepackage{arxiv}
\usepackage[utf8]{inputenc} 
\usepackage[T1]{fontenc}    
\usepackage{hyperref}       
\usepackage{url}            
\usepackage{booktabs}       
\usepackage{amsfonts}       
\usepackage{nicefrac}       
\usepackage{microtype}      
\usepackage{lipsum}	
\usepackage{amsthm}
\usepackage{amsmath}

\title{Dirichlet product of derivative arithmetic with an arithmetic function multiplicative}


\author{
  Es-said En-naoui\\
 \texttt{essaidennaoui1@gmail.com} \\}
\newtheorem{theorem}{Theorem}

\newtheorem{lemma}[theorem]{Lemma}
\newtheorem{proposition}[theorem]{Proposition}
\newtheorem{definition}[theorem]{Definition}
\newtheorem{example}[theorem]{Example}

\begin{document}
\maketitle

\begin{abstract}
We define the derivative of an integer to be the map sending every prime to 1 and satisfying the Leibniz rule. The aim of this article is to calculate the Dirichlet product of
this map with a function arithmetic multiplicative.
\end{abstract}

\section{Introduction}
Barbeau \cite{Barb} defined the arithmetic derivative as the
function $\delta\;:\; \mathbb N \rightarrow \mathbb N$ ,$\;$defined by the rules : 
\begin{enumerate}
\item  $\delta(p) = 1$ for any prime $p \in \mathbb P := \{2,3,5,7,\ldots, p_{i},\ldots \}$.
\item $\delta(ab) = \delta(a)b + a\delta(b)$ for any $a,b \in \mathbb N$ (the Leibnitz rule) .
\end{enumerate}

\vspace{0.4cm}
Let $n$ a positive integer , if $n=\prod_{i=1}^{s}p_{i}^{\alpha_{i}}$ is the prime factorization of $n$, then
the formula for computing the arithmetic derivative
of n is (see, e.g., \cite{Barb, Ufna}) giving by :
\begin{equation}
\delta(n)=n\sum \limits_{i=1}^s\frac{\alpha_i}{p_i}
=n\sum \limits_{p^{\alpha}||n}\frac{\alpha}{p}
\end{equation}
A brief summary on the history of arithmetic derivative and its generalizations
to other number sets can be found, e.g., in \cite{Stay}  .
\vspace{0.5cm}\\
First of all, to cultivate analytic number theory one must acquire a considerable 
skill for operating with arithmetic functions. We begin with a few elementary  considerations. 

\begin{definition}[arithmetic function]
An \textbf{arithmetic function} is a function $f:\mathbb{N}\longrightarrow \mathbb{C}$ with
domain of definition the set of natural numbers $\mathbb{N}$ and range a subset of the set of complex numbers $\mathbb{C}$.
\end{definition}

\begin{definition}[multiplicative function]
A function $f$ is called an \textbf{multiplicative function} if and
only if : 
\begin{equation}\label{eq:1}
f(nm)=f(n)f(m)
\end{equation}
for every pair of coprime integers $n$,$m$. In case (\ref{eq:1}) is satisfied for every pair of integers $n$ and $m$ , which are not necessarily coprime, then the function $f$ is
called \textbf{completely multiplicative}.
\end{definition}

Clearly , if $f$ are a multicative function , then
 $f(n)=f(p_1^{\alpha _1})\ldots f(p_s^{\alpha _s})$, 
 for any positive integer $n$ such that 
$n = p_1^{\alpha _1}\ldots  p_s^{\alpha _s}$ , and if $f$ is completely multiplicative , so we have : $f(n)=f(p_1)^{\alpha _1}\ldots f(p_s)^{\alpha _s}$.

\begin{example}
Let $n\in\mathbb{N}^*$  , This is the same classical arithmetic functions used in this article  :
\begin{enumerate}

\item \textbf{Identity function }: The function defined by $Id(n)=n$ for all positive integer n. 

\item \textbf{The Euler phi function}  
 :\;$\varphi(n)=\sum \limits_{\underset{gcd(k,n)=1}{k=1}}^n 1$.

\item \textbf{The number of distinct prime divisors of n} :\;$\omega(n)=\sum \limits_{p|n}1$ .

 \item \textbf{The Mobiuse function } 
 :\;$\mu(n)=\left\{ \begin{array}{cl}
1 & \textrm{if }\;\;n=1\\
0& \textrm{if }\;\; p^2|n\;\; for\; some\; prime\; p\\
(-1)^{\omega(n)} & \textrm{otherwise}\\
\end{array}\right.$

 \item \textbf{number of positive divisors of $n$} defined by :  $\tau(n)=\sum \limits_{d|n}1$ .
 \item \textbf{sum of divisors function of $n$} defined by :  $\sigma(n)=\sum \limits_{d|n}d$ .
\end{enumerate}

\end{example}

\vspace{0.4cm}
Now ,if $f,g:\mathbb{N}\longrightarrow \mathbb{C}$ are two arithmetic functions from the positive integers to the complex numbers, the Dirichlet convolution $f*g$ is a new arithmetic function defined by: 
\begin{equation}
(f*g)(n)=\sum \limits_{d|n}f(d)g(\frac{n}{d})=
\sum \limits_{ab=n}f(a)g(b)
\end{equation}
where the sum extends over all positive divisors $d$ of $n$ , or equivalently over all distinct pairs $(a, b)$ of positive integers whose product is $n$.\\
In particular, we have $(f*g)(1)=f(1)g(1)$ ,$(f*g)(p)=f(1)g(p)+f(p)g(1)$ for any prime $p$ and for any power prime $p^m$ we have :
\begin{equation}
(f*g)(p^m)=\sum \limits_{j=0}^m f(p^j)g(p^{m-j})
\end{equation} 
This product occurs naturally in the study of Dirichlet series such as the Riemann zeta function. It describes the multiplication of two Dirichlet series in terms of their coefficients: 
\begin{equation}\label{eq:5}
\bigg(\sum \limits_{n\geq 1}\frac{\big(f*g\big)(n)}{n^s}\bigg)=\bigg(\sum \limits_{n\geq 1}\frac{f(n)}{n^s} \bigg)
\bigg( \sum \limits_{n\geq 1}\frac{g(n)}{n^s} \bigg)
\end{equation}
with Riemann zeta function or  is defined by : $$\zeta(s)= \sum \limits_{n\geq 1} \frac{1}{n^s}$$
These functions are widely studied in the literature (see, e.g., \cite{book1, book2, book3}).\\

Now before to past to main result we need this propriety , if $f$ and $g$ are multiplicative function , then $f*g$ is multiplicative.

\section{Main results }

In this section we give the new result of Dirichlet product between derivative arithmetic and an arithmetic function multiplicative $f$ , and we will give the relation between $\tau$ and the derivative arithmetic .

\begin{theorem}
Given a multiplicative function $f$, and lets $n$ and $m$ two positive integers such that $gcd(n,m)=1$ , Then  we have :  
\begin{equation}
(f*\delta)(nm)=\big(Id*f\big)(n).\big(f*\delta\big)(m)+\big(Id*f\big)(m).\big(f*\delta\big)(n)
\end{equation}
\end{theorem}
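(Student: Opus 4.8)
The plan is to expand $(f*\delta)(nm)$ directly as a sum over divisors and then exploit the hypothesis $\gcd(n,m)=1$ twice: once to factor each divisor of $nm$, and once to apply the Leibniz rule to the complementary cofactor. First I would write
\begin{equation}
(f*\delta)(nm)=\sum_{d\mid nm} f(d)\,\delta\!\left(\frac{nm}{d}\right).
\end{equation}
Because $\gcd(n,m)=1$, every divisor $d$ of $nm$ factors uniquely as $d=d_1 d_2$ with $d_1\mid n$, $d_2\mid m$, and $\gcd(d_1,d_2)=1$; correspondingly the cofactor splits as $nm/d=(n/d_1)(m/d_2)$ with $\gcd(n/d_1,\,m/d_2)=1$ as well.

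The two key inputs come next. Since $f$ is multiplicative and $\gcd(d_1,d_2)=1$, we have $f(d_1 d_2)=f(d_1)f(d_2)$. Since $\gcd(n/d_1,\,m/d_2)=1$, Barbeau's Leibniz rule applies to the cofactor and gives
\begin{equation}
\delta\!\left(\frac{n}{d_1}\cdot\frac{m}{d_2}\right)=\delta\!\left(\frac{n}{d_1}\right)\frac{m}{d_2}+\frac{n}{d_1}\,\delta\!\left(\frac{m}{d_2}\right).
\end{equation}
Substituting both facts into the double sum over $d_1\mid n$ and $d_2\mid m$, and distributing the two terms of the Leibniz expansion, produces two separate double sums, each of which factors as a product of a sum over $d_1$ and a sum over $d_2$.

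The final step is to recognize each factored sum as a Dirichlet convolution. The first double sum separates into $\bigl(\sum_{d_1\mid n} f(d_1)\,\delta(n/d_1)\bigr)\bigl(\sum_{d_2\mid m} f(d_2)\,(m/d_2)\bigr)=(f*\delta)(n)\,(Id*f)(m)$, where I use $Id(m/d_2)=m/d_2$; the second double sum separates symmetrically into $(Id*f)(n)\,(f*\delta)(m)$. Adding the two yields exactly the claimed identity.

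I expect the only genuine obstacle to be the bookkeeping: one must check that \emph{both} coprimality conditions really hold, namely $\gcd(d_1,d_2)=1$ so that multiplicativity of $f$ applies termwise, and $\gcd(n/d_1,\,m/d_2)=1$ so that the Leibniz rule applies termwise. Both are immediate consequences of $\gcd(n,m)=1$, so beyond that verification the argument is a clean factorization of a double sum with no analytic subtleties.
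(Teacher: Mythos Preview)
Your argument is correct and is essentially the paper's own proof: both expand $(f*\delta)(nm)$ over divisors, factor each divisor via $\gcd(n,m)=1$, apply the Leibniz rule and multiplicativity of $f$ termwise, and then separate the resulting double sum into two products of convolutions. The only cosmetic difference is that the paper writes the convolution as $\sum_{d\mid nm} f(nm/d)\,\delta(d)$ and applies the Leibniz rule to $\delta(d_1 d_2)$, whereas you apply it to the cofactor $\delta\bigl((n/d_1)(m/d_2)\bigr)$; these are related by the substitution $d\leftrightarrow nm/d$ and lead to the same two summands.
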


\begin{proof}
Lets $n$ and $m$ two positive integers such that $gcd(n,m)=1$, and let $f$  an arithmetic function multiplicative , then we have :
\begin{align*}
(f*\delta)(nm) &=\sum \limits_{d|nm} f\big(\frac{nm}{d}\big)\delta(d)
=\sum \limits_{\underset{d_2|m}{d_1|n}} f(\frac{nm}{d_1d_2})\delta(d_1d_2)
=\sum \limits_{\underset{d_2|m}{d_1|n}} f(\frac{n}{d_1}) f(\frac{m}{d_2}) \bigg(d_1\delta(d_2)+d_2\delta(d_1)\bigg)
\\
&=\sum \limits_{\underset{d_2|m}{d_1|n}}\bigg(  d_1f(\frac{n}{d_1}) f(\frac{m}{d_2})\delta(d_2)
+d_2f(\frac{m}{d_2})f(\frac{n}{d_1})\delta(d_1)
\bigg)
\\
& =\bigg(\sum \limits_{d_1|n} d_1f(\frac{n}{d_1})\bigg)\bigg(\sum \limits_{d_2|m} f(\frac{m}{d_2})\delta(d_2)\bigg)
+
\bigg(\sum \limits_{d_2|m} d_2f(\frac{m}{d_2})\bigg)\bigg(\sum \limits_{d_1|n} f(\frac{n}{d_1})\delta(d_1)\bigg)
\\
&=\big(Id*f\big)(n).\big(f*\delta\big)(m)+\big(Id*f\big)(m).\big(f*\delta\big)(n)
\end{align*}
\end{proof}

\begin{lemma}
For any natural number $n$ , if $n=\prod_{i=1}^{s}p_{i}^{\alpha_{i}}$ is the prime factorization of n, then :
\begin{equation}
\big(f*\delta\big)(n)=\big(Id*f\big)(n)\sum \limits_{i=1}^s
\frac{\big(f*\delta\big)(p^{\alpha_i}_i)}{\big(Id*f\big)(p^{\alpha_i}_i)}
\end{equation}
\end{lemma}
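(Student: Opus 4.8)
The plan is to prove this lemma by induction on $s$, the number of distinct prime factors of $n$, using the theorem just established as the main engine. The key observation is that the theorem gives us a ``product rule'' for $f*\delta$ that closely mirrors the logarithmic-derivative structure appearing in the lemma's right-hand side, so the natural strategy is to show that both the factor $(Id*f)$ and the summed-ratio expression behave correctly under multiplication of coprime arguments.

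\medskip

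\noindent\textbf{Base case and setup.} First I would verify the case $s=1$, where $n=p_1^{\alpha_1}$ is a prime power. Here the right-hand side collapses to
\[
\big(Id*f\big)(p_1^{\alpha_1})\cdot\frac{\big(f*\delta\big)(p_1^{\alpha_1})}{\big(Id*f\big)(p_1^{\alpha_1})}=\big(f*\delta\big)(p_1^{\alpha_1}),
\]
which is trivially the left-hand side, so the base case is immediate (one only needs $(Id*f)(p_1^{\alpha_1})\neq 0$ to divide, a point I will return to as the obstacle). To set up the induction, I would write $n=\prod_{i=1}^{s}p_i^{\alpha_i}$ and split it as $n=N\cdot p_s^{\alpha_s}$ where $N=\prod_{i=1}^{s-1}p_i^{\alpha_i}$, noting $\gcd(N,p_s^{\alpha_s})=1$.

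\medskip

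\noindent\textbf{Inductive step.} I would apply the Theorem with the coprime pair $(N,p_s^{\alpha_s})$ to get
\[
(f*\delta)(n)=\big(Id*f\big)(N)\,\big(f*\delta\big)(p_s^{\alpha_s})+\big(Id*f\big)(p_s^{\alpha_s})\,\big(f*\delta\big)(N).
\]
Now I invoke the inductive hypothesis on $N$ (which has $s-1$ distinct primes) to replace $(f*\delta)(N)$, and I use that $Id*f$ is multiplicative (both $Id$ and $f$ are multiplicative, hence so is their Dirichlet convolution, as stated in the excerpt) so that $(Id*f)(n)=(Id*f)(N)\,(Id*f)(p_s^{\alpha_s})$. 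After substituting, I expect the first term to supply the $i=s$ summand and the second term, combined with the factorization of $(Id*f)(n)$, to reproduce the sum over $i=1,\dots,s-1$; factoring out $(Id*f)(n)$ from everything should collapse the expression into $(Id*f)(n)\sum_{i=1}^{s}\frac{(f*\delta)(p_i^{\alpha_i})}{(Id*f)(p_i^{\alpha_i})}$, completing the induction.

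\medskip

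\noindent\textbf{Main obstacle.} The algebra here is routine bookkeeping, so the real difficulty is the division by $(Id*f)(p_i^{\alpha_i})$: the stated identity is meaningless if any such factor vanishes, and the multiplicative manipulations implicitly assume $(Id*f)(N)\neq 0$ as well. The hard part will therefore be handling (or explicitly excluding via a hypothesis) the degenerate case where $(Id*f)$ takes the value zero on some prime-power divisor; I would either add the standing assumption that $f$ is such that $(Id*f)$ never vanishes on prime powers, or reformulate the conclusion in the cleared-denominator form $(f*\delta)(n)\prod_j (Id*f)(p_j^{\alpha_j})=(Id*f)(n)\sum_i (f*\delta)(p_i^{\alpha_i})\prod_{j\neq i}(Id*f)(p_j^{\alpha_j})$, which holds unconditionally and makes the induction go through without any division.
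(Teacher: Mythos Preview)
Your proposal is correct and matches the paper's own argument almost verbatim: the paper in fact gives two proofs, one an explicit unrolling that peels off one prime power at a time via the Theorem, and one a formal induction on $s$ identical in structure to yours (same base case, same splitting $n=N\cdot p^{\alpha}$, same use of the multiplicativity of $Id*f$). Your cautionary remark about the possible vanishing of $(Id*f)(p_i^{\alpha_i})$ is well taken and is simply not addressed in the paper; your cleared-denominator reformulation is a genuine improvement over what appears there.
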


\begin{proof}
Let $n$ a positive integer such that $n = p_1^{\alpha _1}\ldots  p_s^{\alpha _s}$ and let $f$ an arithmetic function , Then :
\begin{align*}
\big(f*\delta\big)(n) &=\big(f*\delta\big)(p_1^{\alpha _1}\ldots  p_s^{\alpha _s})
\\
&=
\big(Id*f\big)(p_2^{\alpha _2}\ldots  p_s^{\alpha _s}).\big(f*\delta\big)(p^{\alpha_1}_1)
+
\big(Id*f\big)(p^{\alpha_1}_1).\big(f*\delta\big)(p_2^{\alpha _2}\ldots  p_s^{\alpha _s})
\\
& =\big(Id*f\big)(n).
\frac{\big(f*\delta\big)(p^{\alpha_1}_1)}{\big(Id*f\big)(p_1^{\alpha _1})}
+
\big(Id*f\big)(p^{\alpha_1}_1).
\bigg[
\big(Id*f\big)(p_3^{\alpha _3}\ldots  p_s^{\alpha _s}).\big(f*\delta\big)(p^{\alpha_2}_2)
+\\
&+
\big(Id*f\big)(p^{\alpha_2}_2).\big(f*\delta\big)(p_3^{\alpha _3}\ldots  p_s^{\alpha _s})
\bigg]
\\
&=\big(Id*f\big)(n)\frac{\big(f*\delta\big)(p^{\alpha_1}_1)}{\big(Id*f\big)(p_1^{\alpha _1})}
+
\big(Id*f\big)(n)\frac{\big(f*\delta\big)(p^{\alpha_2}_2)}{\big(Id*f\big)(p_2^{\alpha _2})}
+\\
&+
\big(Id*f\big)(p^{\alpha_1}_1)\big(Id*f\big)(p^{\alpha_2}_2)\big(f*\delta\big)(p_3^{\alpha _3}\ldots  p_s^{\alpha _s})
\\
\vdots\\
&=\big(Id*f\big)(n)\frac{\big(f*\delta\big)(p^{\alpha_1}_1)}{\big(Id*f\big)(p_1^{\alpha _1})}
+
\big(Id*f\big)(n)\frac{\big(f*\delta\big)(p^{\alpha_2}_2)}{\big(Id*f\big)(p_2^{\alpha _2})}
+ \ldots
+
\big(Id*f\big)(n)\frac{\big(f*\delta\big)(p^{\alpha_s}_s)}{\big(Id*f\big)(p_s^{\alpha _s})}
\\
&=\big(Id*f\big)(n)
\bigg[
\frac{\big(f*\delta\big)(p^{\alpha_1}_1)}{\big(Id*f\big)(p_1^{\alpha _1})}+\ldots+ \frac{\big(f*\delta\big)(p^{\alpha_s}_s)}{\big(Id*f\big)(p_s^{\alpha _s})}
\bigg]\\
&=\big(Id*f\big)(n)\sum \limits_{i=1}^s
\frac{\big(f*\delta\big)(p^{\alpha_i}_i)}{\big(Id*f\big)(p^{\alpha_i}_i)}
\end{align*}
\end{proof}

an other prof by induction on $s$ that if $n=\prod_{i=1}^{s}p_{i}^{\alpha_{i}}$ then $(f*\delta)(n)=\big(Id*f\big)(n)\sum \limits_{i=1}^s
\frac{\big(f*\delta\big)(p^{\alpha_i}_i)}{\big(Id*f\big)(p^{\alpha_i}_i)}$.

\begin{proof}
Consider $n\in\mathbb{N}$ and express $n=\prod_{i=1}^{s}p_{i}^{\alpha_{i}}$ where all $p_i$ are distinct .\\
where $s=1$  ,  it is clear that $(f*\delta)(n)=\big(Id*f\big)(n)\sum \limits_{i=1}^1
\frac{\big(f*\delta\big)(p^{\alpha_i}_i)}{\big(Id*f\big)(p^{\alpha_i}_i)}
=\big(Id*f\big)(p^{\alpha_1})
\frac{\big(f*\delta\big)(p^{\alpha_1}_1)}{\big(Id*f\big)(p^{\alpha_1}_1)}=\big(f*\delta\big)(p^{\alpha_1}_1).
$\\
 Assume that $n=\prod_{i=1}^{s}p_{i}^{\alpha_{i}}$, then we have :
\begin{align*}
\big(id*\delta\big)(n.p^{\alpha_{s+1}}_{s+1})
 &=\big(Id*f\big)(p^{\alpha_{s+1}}_{s+1}).\big(f*\delta\big)(n)+\big(Id*f\big)(n).\big(f*\delta\big)(p^{\alpha_{s+1}}_{s+1})\\
 &=\big(Id*f\big)(p^{\alpha_{s+1}}_{s+1}).
 \big(Id*f\big)(n)\sum \limits_{i=1}^s
\frac{\big(f*\delta\big)(p^{\alpha_i}_i)}{\big(Id*f\big)(p^{\alpha_i}_i)}
+
\big(Id*f\big)(p^{\alpha_{s+1}}_{s+1}).
 \big(Id*f\big)(n)
\frac{\big(f*\delta\big)(p^{\alpha_{s+1}}_{s+1})}{\big(Id*f\big)(p^{\alpha_{s+1}}_{s+1})}\\
&=\big(Id*f\big)(n.p^{\alpha_{s+1}}_{s+1})\sum \limits_{i=1}^s
\frac{\big(f*\delta\big)(p^{\alpha_i}_i)}{\big(Id*f\big)(p^{\alpha_i}_i)}
+
\big(Id*f\big)(n.p^{\alpha_{s+1}}_{s+1})
\frac{\big(f*\delta\big)(p^{\alpha_{s+1}}_{s+1})}{\big(Id*f\big)(p^{\alpha_{s+1}}_{s+1})}\\
&=\big(Id*f\big)(n.p^{\alpha_{s+1}}_{s+1})\bigg[\sum \limits_{i=1}^s
\frac{\big(f*\delta\big)(p^{\alpha_i}_i)}{\big(Id*f\big)(p^{\alpha_i}_i)}
+
\frac{\big(f*\delta\big)(p^{\alpha_{s+1}}_{s+1})}{\big(Id*f\big)(p^{\alpha_{s+1}}_{s+1})}\bigg]\\
&=\big(Id*f\big)(n.p^{\alpha_{s+1}}_{s+1})\sum \limits_{i=1}^{s+1}
\frac{\big(f*\delta\big)(p^{\alpha_i}_i)}{\big(Id*f\big)(p^{\alpha_i}_i)}
\end{align*}
\end{proof}

\begin{proposition}\label{prop_6}
Let $f$ a function arithmetic multiplicative , and $\delta$ the derivative arithmetic , then we have :
\begin{equation}
\big(Id*\delta\big)(n)=\frac{1}{2}\tau(n)\delta(n) 
\end{equation}
\end{proposition}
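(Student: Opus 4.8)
The plan is to specialize the preceding Lemma to the case $f = Id$, so that the whole statement collapses to two local computations at prime powers together with the closed form for $\delta(n)$ recorded in equation (1). Setting $f = Id$ in the Lemma gives
$$\big(Id*\delta\big)(n)=\big(Id*Id\big)(n)\sum_{i=1}^s\frac{\big(Id*\delta\big)(p_i^{\alpha_i})}{\big(Id*Id\big)(p_i^{\alpha_i})},$$
so everything hinges on evaluating $Id*Id$ and $Id*\delta$ at a single prime power.

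First I would compute $\big(Id*Id\big)(n)=\sum_{d|n} d\cdot(n/d)=\sum_{d|n} n = n\,\tau(n)$; in particular $\big(Id*Id\big)(p^{\alpha})=p^{\alpha}(\alpha+1)$, using $\tau(p^{\alpha})=\alpha+1$. Next I would evaluate $\big(Id*\delta\big)(p^{\alpha})=\sum_{j=0}^{\alpha} p^{j}\,\delta(p^{\alpha-j})$. Since $\delta(1)=0$ and $\delta(p^{k})=k\,p^{k-1}$ for $k\ge 1$ (which follows from the Leibniz rule, or directly from equation (1)), the $j=\alpha$ term drops out and each surviving term contributes $p^{j}(\alpha-j)p^{\alpha-j-1}=(\alpha-j)p^{\alpha-1}$; summing over $j$ and reindexing with $k=\alpha-j$ yields $\big(Id*\delta\big)(p^{\alpha})=p^{\alpha-1}\,\tfrac{\alpha(\alpha+1)}{2}$.

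Dividing these two local quantities, the factor $(\alpha+1)$ cancels and one power of $p$ survives in the denominator, leaving the clean ratio
$$\frac{\big(Id*\delta\big)(p^{\alpha})}{\big(Id*Id\big)(p^{\alpha})}=\frac{\alpha}{2p}.$$
Substituting back and pulling the constant and $\tau(n)$ out front gives $\big(Id*\delta\big)(n)=\tfrac12\,\tau(n)\,\big(n\sum_{i=1}^s \alpha_i/p_i\big)$. The final step is to recognize the parenthesized quantity as $\delta(n)$ via the explicit formula (1), which immediately produces $\big(Id*\delta\big)(n)=\tfrac12\,\tau(n)\,\delta(n)$.

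I expect the only genuinely delicate point to be the prime-power evaluation of $Id*\delta$: one must discard the vanishing $j=\alpha$ term correctly and carry out the arithmetic-series summation $\sum_{k=1}^{\alpha}k=\alpha(\alpha+1)/2$ so that the resulting $(\alpha+1)$ factor matches $\tau(p^{\alpha})$ and cancels. Everything else is bookkeeping, and the concluding identification with $\delta(n)$ is precisely the shape of equation (1).
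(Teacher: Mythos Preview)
Your proof is correct and follows essentially the same route as the paper's own argument: specialize the Lemma to $f=Id$, compute $(Id*Id)(n)=n\,\tau(n)$ and $(Id*\delta)(p^{\alpha})=\tfrac12\alpha(\alpha+1)p^{\alpha-1}$, then simplify the resulting sum to $\tfrac12\tau(n)\delta(n)$ via equation~(1). The only cosmetic difference is the order of the factors in the prime-power convolution, which is immaterial.
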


\begin{proof}
Since $(Id*Id)(n)=\sum\limits_{d|n} \frac{n}{d}d
=n\sum \limits_{d|n} 1 =n\tau(n)$.\\
 and :\;\;
$(Id*\delta)(p^{\alpha})=
\sum \limits_{j=1}^{\alpha} \delta(p^j)Id(p^{\alpha-j})
=
\sum \limits_{j=1}^{\alpha} jp^{j -1}p^{\alpha-j}
=\frac{1}{2}\alpha(\alpha+1)p^{\alpha-1}
$.\\
Then for every a positive integer $n$ such that $n = p_1^{\alpha _1}\ldots  p_s^{\alpha _s}$ , we have :
\begin{align*}
\big(Id*\delta\big)(n)&=\big(Id*Id\big)(n)\sum \limits_{i=1}^s
\frac{\big(Id*\delta\big)(p^{\alpha_i}_i)}{\big(Id*Id\big)(p^{\alpha_i}_i)}\\
&=n\tau(n)\sum \limits_{i=1}^s
\frac{\frac{1}{2}\alpha_i(\alpha_{i}+1)p^{\alpha_i-1}_i}{p^{\alpha_i}_i\tau(p^{\alpha_i}_i)}\\
&=n\tau(n)\sum \limits_{i=1}^s
\frac{\frac{1}{2}\alpha_i(\alpha_{i}+1)p^{\alpha_i-1}_i}{p^{\alpha_i}_i(\alpha_i+1)}\\
&=\frac{1}{2} n\tau(n)\sum \limits_{i=1}^s \frac{\alpha_i}{p_i}=
\frac{1}{2}\tau(n)\delta(n)
\end{align*}
\end{proof}

So by the proposition \ref{prop_6} , and the equality \ref{eq:5} we have this relation between arithmetic derivative and the function $\tau$ :
\begin{equation}
2\zeta(s-1)\sum \limits_{n\geq 1} \frac{\delta(n)}{n^s}
=
\sum \limits_{n\geq 1} \frac{\delta(n)\tau(n)}{n^s}
\end{equation}

Let defined the new function arithmetic called \textbf{En-naoui} function , by : 
\begin{equation}
\Phi_\varphi(n)=n\sum \limits_{p|n}\bigg(1-\frac{1}{p}\bigg)
\end{equation}
Then we have this  equality related between 8 arithmetic function :
\begin{equation}
\big(\mu*\delta\big)(n)=
\varphi(n)\bigg(
\delta(n)-2\omega(n)+B(n)+\frac{\Phi_\varphi(n)}{n}+
\frac{\big(B*Id\big)(n)}{\sigma(n)}
\bigg) .
\end{equation}
with  $B$ is the arithmetic function defined by :
$B(n)=\sum \limits_{p^{\alpha}||n}\alpha p$.
In next article i will prove this equality, just I need to submit an article about this new function .

\section{Acknowledgements}

I would like to thank S.Allo for the opportunity to collaborate on my research and for his continued mentorship, and friendship, and I'm indebted to the Professor Zouhaïr Mouayn to endorse me to submit my article here.I also extend my deepest appreciation to my family for opening their minds, hearts, and homes to me and my work.

\bibliographystyle{unsrt}  


\end{document}